\theoremstyle{plain}
\newtheorem{theo}{Theorem}[section]
\newtheorem{que}{Question}
\theoremstyle{definition}
\newtheorem*{exa*}{Example}
\newtheorem{lem}[theo]{Lemma}
\newtheorem{cor}[theo]{Corollary}
\newtheorem{pro}[theo]{Proposition}
\newtheorem*{pro*}{Proposition}
\theoremstyle{remark}
\newtheorem*{rem}{Remark}
\newcommand\tx\text
\newcommand\f\frac
\newcommand\D{\mathcal D}
\newcommand\ord{\mathcal O}
\newcommand\la\langle
\newcommand\ra\rangle
\newcommand\highl[1]{\textbf{#1}}
\newcommand\df[1]{\highl{#1}}
\newcommand\ind{1}
\newcommand\lh[1]{{#1}^{\mathrm l}}
\newcommand\rh[1]{{#1}^{\mathrm r}}
\newcommand\seq=
\newcommand\qes\eqqcolon
\begin{document}

\title{Almost-Orthogonality of Restricted Haar-Functions}
\author{Julian Weigt}
\date{Aalto University\\[2ex]\today}
\maketitle

\begin{abstract}
	We consider the Haar functions \(h_I\) on dyadic intervals. We show that if \(p>\f23\) and \(E\subset[0,1]\) then the set of all functions \(\|h_I\ind_E\|_2^{-1}h_I\ind_E\) with \(|I\cap E|\geq p|I|\) is a Riesz sequence. For \(p\leq\f23\) we provide a counterexample. 

\end{abstract}

\section{Introduction}

In this paper we prove a stability result for perturbed Haar functions. It grew out of the author's Master's thesis \cite{thesis}, written in Bonn under the supervision of Professor Christoph Thiele. It was motivated by an idea on how to to extend the result in \cite{kovac_thiele_zorin-kranich_2015} to three general functions.

The \df{Haar function} of the interval \(I\seq[a,b)\) is given by
\[h_I\seq-\ind_{\lh I}+\ind_{\rh I},\]
where \(\lh I\seq[a,\f{a+b}2)\) and \(\rh I\seq[\f{a+b}2,b)\) are the left and right halves of \(I\). We consider the Haar functions of the \df{dyadic intervals}
\[\D=\bigl\{\bigl[k2^n,(k+1)2^n\bigr)\bigm|n,k\in\mathbb{Z}\bigr\}.\]
The main result of this paper is the following theorem.
\begin{theo}\label{theo_arbeRiesz}
	For each \(p>\f23\) there is a constant \(c>0\) such that for all measurable sets \(E\subset[0,1)\) and all sequences \((a_I)_{I\in\D}\) with \(a_I=0\) if \(|I\cap E|<p|I|\), we have
	\begin{equation}\label{eq_goal}
		\Bigl\|\sum_{I\in\D}a_Ih_I\ind_E\Bigr\|_2^2\geq c\sum_{I\in\D}\|a_Ih_I\ind_E\|_2^2,
	\end{equation}
	whenever the right-hand side is finite. For \(p\leq\f23\) there is no such constant \(c>0\).
\end{theo}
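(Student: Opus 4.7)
The approach is to reformulate \eqref{eq_goal} as a uniform lower bound on $\int_E g^2$, where $g\seq\sum_I a_I h_I$. Haar orthogonality gives $\|g\|_2^2 = \sum_I a_I^2|I|$, while the right-hand side of \eqref{eq_goal} equals $\sum_I a_I^2|I\cap E|$, which lies between $p\|g\|_2^2$ and $\|g\|_2^2$ by heaviness. Thus \eqref{eq_goal} is equivalent, up to constants depending on $p$, to the concentration inequality
\[\int_{E^c}g^2 \leq (1-c')\|g\|_2^2\]
for $g$ in the closed span of the heavy Haar functions.

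To attack this, expand $g^2 = \sum_I a_I^2\ind_I + 2\sum_I a_I S_I h_I$, where $S_I\seq\sum_{J\supsetneq I}a_J h_J|_I$ is constant on $I$ and coincides with $\operatorname{avg}_I g$. Integrating against $\ind_{E^c}$ and using $\int_{E^c}h_I = -\delta_I$, with $\delta_I\seq|\rh I\cap E|-|\lh I\cap E|$, yields
\[\int_{E^c}g^2 = \sum_I a_I^2|I\setminus E| - 2\sum_I a_I S_I\delta_I.\]
The diagonal sum is bounded by $(1-p)\|g\|_2^2$ since $|I\setminus E|\leq(1-p)|I|$. A short geometric argument gives $|\delta_I|\leq(1-p)|I|$: each of $\lh I,\rh I$ has measure $|I|/2$ and loses at most $|I\setminus E|\leq(1-p)|I|$ to $E^c$, so the two halves' $E$-measures differ by at most $(1-p)|I|$; this factor becomes $\tfrac13$ exactly at $p=\tfrac23$, which is the origin of the threshold. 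Cauchy-Schwarz with weights $|I|^{\pm1/2}$ gives for the cross term
\[\Bigl|\sum_I a_I S_I\delta_I\Bigr| \leq (1-p)\|g\|_2 \Bigl(\sum_{I\in\mathcal H_1}|I|S_I^2\Bigr)^{1/2},\]
where $\mathcal H_1\seq\{I:a_I\neq0,\;I\not\subset E\}$, using that $\delta_I=0$ whenever $I\subset E$.

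The main obstacle is bounding $\sum_{I\in\mathcal H_1}|I|S_I^2$ by $C(p)\|g\|_2^2$. This is a Carleson-type embedding for the dyadic means $S_I=\operatorname{avg}_I g$ restricted to $\mathcal H_1$. The full family $\mathcal H_1$ is not Carleson in $\D$ in an absolute sense, but the constraint $|I\cap E^c|\in(0,(1-p)|I|]$, together with the Lebesgue density theorem applied to $E^c$, forces each $x\in E^c$ to lie in only finitely many heavy intervals. I would establish the embedding by a stopping-time decomposition, peeling off maximal sub-families of $\mathcal H_1$ on which the ratio $|I\setminus E|/|I|$ is close to its maximum $(1-p)$, and summing the geometric contributions across generations. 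The resulting constant $C(p)$ remains finite precisely for $p>\tfrac23$, and one obtains
\[\int_{E^c}g^2 \leq \bigl((1-p)+2(1-p)\sqrt{C(p)}\bigr)\|g\|_2^2,\]
which is strictly less than $\|g\|_2^2$ for $p>\tfrac23$ and yields \eqref{eq_goal} with an explicit constant.

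For the counterexample at $p\leq\tfrac23$, I would build a self-similar dyadic configuration in which both children of every heavy interval are themselves heavy, with densities alternating between $p$ and $(p+1)/2$, so that the imbalances $|\delta_I|$ saturate the bound $(1-p)|I|/2$. Choosing the signs of $(a_I)$ dictated by the branching so that the contributions to $g$ reinforce in $E^c$ but cancel in $E$, I would drive the ratio in \eqref{eq_goal} to $0$ as the tree depth grows. The most delicate step is the spectral analysis of the limiting Gram operator, whose smallest eigenvalue vanishes precisely at $p=\tfrac23$, matching exactly the threshold at which the Carleson estimate in the positive direction ceases to apply.
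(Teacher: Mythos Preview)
Your reformulation and the expansion $\int_{E^c}g^2=\sum_I a_I^2|I\setminus E|-2\sum_I a_I S_I\delta_I$ are correct, as is $|\delta_I|\le(1-p)|I|$. The argument fails at the Carleson embedding, which you assert but do not prove. After Cauchy--Schwarz you need $\sum_{I\in\mathcal H_1}|I|S_I^2\le C(p)\|g\|_2^2$ with $C(p)<p^2/(4(1-p)^2)$; for $p$ just above $\tfrac23$ this forces $C(p)$ arbitrarily close to~$1$. Your Lebesgue-density remark gives no uniform bound at all, and the claim that ``$C(p)$ remains finite precisely for $p>\tfrac23$'' is not right as stated: the threshold is not about finiteness but about the \emph{size} of $C(p)$. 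Moreover, your family $\mathcal H_1$ depends on $g$ through the condition $a_I\ne0$, so this is not a standard Carleson embedding; if you instead enlarge to all heavy $I\not\subset E$, one can arrange $E$ so that arbitrarily many generations of dyadic subintervals of a fixed $J$ lie in that family, and the Carleson sum $\sum_{I\subset J}|I|$ is then far larger than $|J|$. So the step that carries the whole proof is missing, and the stopping-time sketch does not indicate why the constant should come out below the required threshold.

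For comparison, the paper does not pass through a square-function estimate. It proves, by induction on the dyadic scale $n$, a weighted inequality $\bigl\|\sum_{I\in\D_n}a_Ih_I\ind_E\bigr\|_{L^2(w_n)}^2\ge\sum_{I\in\D_n}\|a_Ih_I\ind_E\|_2^2$, where $w_n$ is constant on each interval $I$ of length $2^{-n-1}$ with value $g(q_I)/q_I$ for an explicit Bellman-type function $g$ of the local density $q_I=|I\cap E|/|I|$. The induction step reduces, interval by interval, to the scalar inequality $\tfrac{(1-a)^2}{2}g(q_1)+\tfrac{(1+a)^2}{2}g(q_2)-g\bigl(\tfrac{q_1+q_2}{2}\bigr)\ge a^2$, which the paper verifies for its specific $g$; the uniform bound $g(q)\le Cq$ then removes the weight and yields \eqref{eq_goal} with $c=1/C$. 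The condition $p>\tfrac23$ is exactly what makes such a $g$ exist with finite $C$. For the counterexample the paper is also far more concrete than your sketch: it takes $E=[0,\tfrac23]$, a single nested chain $I_{2n}$ with $|I_{2n}\cap E|=\tfrac23|I_{2n}|$ and coefficients $a_{2n}=2^{n-1}$, and checks directly that $\bigl\|\sum_{k\le n} a_{2k}h_{I_{2k}}\ind_E\bigr\|_2^2=\tfrac23$ while $\sum_{k\le n}\|a_{2k}h_{I_{2k}}\ind_E\|_2^2=\tfrac23+\tfrac n6$. No self-similar tree with both children heavy and no Gram-operator spectral analysis is needed.
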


\begin{rem} The proof strategy of Theorem \ref{theo_arbeRiesz} resembles the well known Bellman function technique as for example in \cite{nazarov1999bellman}. A rephrasing of the proof which resembles the Bellman function technique more closely can be found in Section 2.1.5 in \cite{thesis}.

	The proof also yields an explicit value for \(c\). We discuss its optimality in Section \ref{sec_remarks}. Furthermore, if the right-hand side of \eqref{eq_goal} converges, then the sum on the left-hand-side converges in \(L^2\) because for any finite subset \(D_0\subset\D\) we have
	\begin{equation}\label{eq_calcbessel}
		\Bigl\|\sum_{I\in D_0}a_Ih_I\ind_E\Bigr\|_2^2\leq\Bigl\|\sum_{I\in D_0}a_Ih_I\Bigr\|_2^2=\sum_{I\in D_0}\|a_Ih_I\|_2^2\leq\f1p\sum_{I\in D_0}\|a_Ih_I\ind_E\|_2^2.
	\end{equation}
	This also implies \eqref{eq_calcbessel} with \(D_0=\D\), which means that a reverse inequality of \eqref{eq_goal} holds as for all \(p>0\).
\end{rem}

In a more general setting, a sequence \(V\) in a Hilbert space is called a \df{Bessel sequence} if
\begin{align*}
	\Bigl\|\sum_{v\in V}a_vv\Bigr\|^2&\leq C\sum_{v\in V}|a_v|^2\\
	\intertext{holds, and a \df{Riesz sequence} if in addition also}
	\Bigl\|\sum_{v\in V}a_vv\Bigr\|^2&\geq c\sum_{v\in V}|a_v|^2
\end{align*}
holds, where the constants \(c>0\) and \(C<\infty\) respectively are independent of \((a_v)_{v\in V}\subset\ell^2(V)\). Inserting \(\f{a_I}{\|h_I\ind_E\|_2}\) for \(a_I\) in \eqref{eq_calcbessel} shows that for all \(p>0\), the sequence
\begin{equation}\label{eq_epbig}
	V=\biggl\{\f{h_I\ind_E}{\|h_I\ind_E\|_2}\biggm| I\in\D,\ |I\cap E|\geq p|I|\biggr\}
\end{equation}
is a Bessel sequence with constant \(\f1p\). Theorem \ref{theo_arbeRiesz} states that if \(p>\f23\) then \eqref{eq_epbig} is also a Riesz sequence. A weaker result already follows from the well-known \highl{Kadison-Singer Problem}, which was resolved recently by Marcus, Spielman and Srivastava in \cite{marcus2015interlacing}. In doing so, they also solved the numerous equivalent problems, one of which is the \highl{Feichtinger Conjecture}, which states that every Bessel sequence can be partitioned into finitely many Riesz sequences. This means it can already be concluded from \eqref{eq_calcbessel} that there is a finite partition of \eqref{eq_epbig} into Riesz sequences. Building upon \cite{marcus2015interlacing}, Bownik, Casazza, Marcus and Speegle also proved a quantitative version of the Feichtinger Conjecture in \cite{bownik2016improved}. 
For the specific setting of restricted Haar functions, their Corollary 6.5 in \cite{bownik2016improved} reads that if \(p>\f34\) then \eqref{eq_epbig} can be partitioned into two Riesz sequences. Theorem \ref{theo_arbeRiesz} improves on that because it already applies for \(p>\f23\) and states that \eqref{eq_epbig} is already a Riesz sequence prior to partitioning.

For more details on the relation of this paper to other work; see Section \ref{sec_remarks}.

\section{Proof of the Case \texorpdfstring{\(p>\f23\)}{p > 2/3}}


For \(n\in\mathbb{N}_0\) denote \(\D_n\seq\{I\in\D\mid |I|\geq2^{-n}\}\). The idea is to first prove a weighted inequality,
\[\Bigl\|\sum_{I\in\D_n}a_Ih_I\ind_E\Bigr\|_{L^2(w_n)}^2\geq\sum_{I\in\D_n}\|a_Ih_I\ind_E\|_2^2.\]
The weights are introduced in order to allow a proof by induction on \(n\). They will be uniformly bounded from above, so that the case \(p>\f23\) of Theorem \ref{theo_arbeRiesz} follows from the weighted inequality.

The weights \(w_n\) look as follows: Fix \(\f23<p\leq1\). Define \(g:[0,1]\rightarrow\mathbb{R}\) by
\begin{equation}\label{eq_gdef}
	g(q)\seq
	\begin{cases}
		1+\f{p(2-p)}{(3p-2)(3p-2q)}&q\geq p,\\
		g(p)\f qp&q\leq p.
	\end{cases}
\end{equation}
It is well defined because \(2q\leq 2<3p\). Now on each interval \(I\in\D_{n+1}\setminus\D_n\) abbreviate \(q_I\seq\f{|I\cap E|}{|I|}\) and assign \(w_n\) the constant value \(\f{g(q_I)}{q_I}\) if \(q_I>0\). If \(q_I=0\) then the value of \(w_n\) does not matter.
The properties of \(g\) that we need are collected in the following proposition.

\begin{pro}\label{pro_gproperties} Let \(\f23<p\leq1\). Then the function \(g\) has the following properties:
	\begin{enumerate}
		\item For all \(q_1,q_2\in[0,1]\) and \(a\in\mathbb{R}\) with \(\f{q_1+q_2}2\geq p\) or \(a=0\) we have
			\begin{equation}\label{eq_Gpos}
				\f{(1-a)^2}2g(q_1)+\f{(1+a)^2}2g(q_2)-g\Bigl(\f{q_1+q_2}2\Bigr)\geq a^2.
			\end{equation}
		\item There is a constant \(C>0\) s.t.\ for all \(q\in[0,1]\) we have
			\begin{equation}\label{eq_gcomp1}
				q\leq g(q)\leq Cq.
			\end{equation}
	\end{enumerate}
\end{pro}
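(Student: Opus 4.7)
My plan is to handle part (2) directly from the definition, and to attack part (1) by viewing (\ref{eq_Gpos}) as a quadratic in $a$.

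For part (2), note first that on $[0,p]$ the formula gives $g(q) = (g(p)/p)\,q$ with $g(p)/p = 2/(3p-2) \geq 1$, so $q \leq g(q) \leq (g(p)/p)\,q$ there. On $[p,1]$, direct computation yields $g'(q) = \frac{2p(2-p)}{(3p-2)(3p-2q)^2} > 0$, so $g$ is increasing from $g(p) = 2p/(3p-2) \geq 1$ to the finite value $g(1)$; hence $q \leq 1 \leq g(q) \leq g(1) \leq (g(1)/p)\,q$ on $[p,1]$. Taking $C = \max\{g(p)/p,\,g(1)/p\}$ gives \eqref{eq_gcomp1}.

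For part (1), rewrite \eqref{eq_Gpos} as the quadratic inequality
\[
Q(a) := (s-1)\,a^2 + (g(q_2)-g(q_1))\,a + (s - g(\bar q)) \geq 0, \qquad s := \tfrac{g(q_1)+g(q_2)}{2},\ \bar q := \tfrac{q_1+q_2}{2}.
\]
The case $a = 0$ reduces to midpoint convexity $g(\bar q) \leq s$, which I would prove by establishing full convexity of $g$ on $[0,1]$: linearity on $[0,p]$, $g'' > 0$ on $(p,1]$ from the explicit $g'$ above, and compatibility at the corner via $g'(p^+) = \frac{2(2-p)}{p(3p-2)} \geq \frac{2}{3p-2} = g(p)/p$, which is equivalent to $p \leq 1$.

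For the main case $\bar q \geq p$, requiring $Q(a) \geq 0$ for all real $a$ forces $s \geq 1$ together with non-positive discriminant. Using $4s^2 - (g(q_2)-g(q_1))^2 = 4g(q_1)g(q_2)$, the discriminant inequality rearranges, after substituting $G(q) := g(q)-1$, into
\[
G(q_1)\,G(q_2) \geq \tfrac{G(q_1)+G(q_2)}{2}\,G(\bar q).
\]
The explicit form on $[p,1]$ gives $1/G(q) = (3p-2)(3p-2q)/(p(2-p))$, which is affine in $q$; so for $q_1, q_2 \in [p,1]$ we obtain $1/G(\bar q) = \tfrac12(1/G(q_1)+1/G(q_2))$, and the inequality holds as \emph{equality}. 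This is the design principle behind the choice of $g$.

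The main obstacle is the mixed sub-case $q_1 < p \leq q_2$ (with $\bar q \geq p$), where the linear portion of $g$ on $[0,p]$ comes into play and $G(q_1)$ may even change sign. My strategy is to hold $\bar q$ fixed and slide $q_1$ up to $p$, adjusting $q_2$ downward correspondingly, and to verify via the explicit formulas that the quantity $G(q_1)G(q_2) - \tfrac{G(q_1)+G(q_2)}{2}G(\bar q)$ moves monotonically in the right direction, so the worst case is the already-settled $q_1 = p$. The condition $s \geq 1$ in this sub-case can be verified directly from the explicit forms together with $q_2 \geq 2p-q_1$; the fact that the construction works exactly down to $p > 2/3$ (with a blow-up as $p \to 2/3^+$) is already built into the denominator $3p-2$ in the definition of $g$.
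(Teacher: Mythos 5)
Your part (2) is correct, and in part (1) the reduction to ``leading coefficient \(s-1\geq0\) plus nonpositive discriminant'', the convexity argument for the case \(a=0\), and the identity \(\f1{G(\bar q)}=\f12\bigl(\f1{G(q_1)}+\f1{G(q_2)}\bigr)\) for \(q_1,q_2\in[p,1]\) are all sound; that last identity is exactly the content of the paper's Lemma \ref{lem_fracconvex} (the quadratic in \(a\) has zero discriminant for \(\tilde g\)). However, the mixed sub-case \(q_1<p\leq q_2\) with \(\f{q_1+q_2}2\geq p\) is the actual crux of the proposition --- it is the situation the piecewise definition of \(g\) exists to handle --- and for it you have only announced a strategy, not given a proof. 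The proposed deformation (slide \(q_1\) up to \(p\) at fixed \(\bar q\)) changes \(G(q_2)\) as well as \(G(q_1)\), and the claimed monotonicity of \(G(q_1)G(q_2)-\f{G(q_1)+G(q_2)}2G(\bar q)\) along that path is neither verified nor obvious. As written, the argument is incomplete.

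Two observations close the gap, and they lead back to the paper's own route. First, \(\bar q\geq p\) and \(q_2\leq1\) force \(q_1\geq2p-1\), and a direct computation gives \(g(2p-1)=\f{2(2p-1)}{3p-2}=1+\f p{3p-2}=\tilde g(2p-1)\) (the paper's Lemma \ref{lem_g2pm1}); in particular \(G(q_1)>0\) on the relevant range, so the sign change you worry about never occurs. Second, the better deformation keeps \(q_1,q_2\) fixed and lowers \(g(q_1)\) to \(\tilde g(q_1)\): on \([0,p]\) the function \(g\) is the secant line of the convex function \(\tilde g\) through the points \(2p-1\) and \(p\), hence \(g\geq\tilde g\) on \([2p-1,p]\), while \(g=\tilde g\) on \([p,1]\). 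In your notation, \(\f{\partial}{\partial G_1}\bigl[G_1G_2-\f{G_1+G_2}2G(\bar q)\bigr]=G_2-\f12G(\bar q)>0\) because \(G\) is increasing on \([p,1]\) and \(q_2\geq\bar q\); so decreasing \(G_1\) to \(\tilde g(q_1)-1\) only makes the inequality harder, and there it holds with equality by your harmonic-mean identity. (The remaining condition \(s\geq1\) follows from \(s\geq g(\bar q)\geq g(p)>1\).) The paper performs exactly this replacement, but directly in \eqref{eq_Gpos}, where the coefficients \(\f{(1\mp a)^2}2\) are visibly nonnegative, and then invokes Lemma \ref{lem_fracconvex}; this avoids the discriminant bookkeeping entirely.
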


Proposition \ref{pro_gproperties} is the crucial step. After that it requires not much more than bookkeeping to 
conclude the case \(p>\f23\) of Theorem \ref{theo_arbeRiesz}. In order to prove Proposition \ref{pro_gproperties}, we first show the Lemmas \ref{lem_fracconvex} and \ref{lem_g2pm1}.

Define \(\tilde g:[0,1]\rightarrow\mathbb{R}\) by
\[\tilde g(q)\seq1+\f{p(2-p)}{(3p-2)(3p-2q)},\]
so that \(g=\tilde g\) on \(q\geq p\). 

\begin{lem}\label{lem_fracconvex} 
	Let \(q_1,q_2\in[0,1],\ a\in\mathbb{R}\). Then
	\[\f{(1-a)^2}2\tilde g(q_1)+\f{(1+a)^2}2\tilde g(q_2)-\tilde g\Bigl(\f{q_1+q_2}2\Bigr)\geq a^2.\]
\end{lem}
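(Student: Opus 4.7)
The plan is to change variables so that $\tilde g$ becomes (up to an additive constant) the reciprocal of an affine function of the new variable, after which the inequality should reduce to Cauchy-Schwarz in Engel form.

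First I would set $u_i\seq3p-2q_i$ for $i\in\{1,2\}$. Since $p>\f23$ and $q_i\in[0,1]$, both $u_i$ are strictly positive. Writing $K\seq\f{p(2-p)}{3p-2}>0$, the definition of $\tilde g$ gives $\tilde g(q_i)=1+\f K{u_i}$, and the identity $3p-2\cdot\f{q_1+q_2}2=\f{u_1+u_2}2$ yields $\tilde g\bigl(\f{q_1+q_2}2\bigr)=1+\f{2K}{u_1+u_2}$.

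Next I would expand the left-hand side of the claimed inequality. The contribution of the constant \(1\) in each $\tilde g$ is
\[\f{(1-a)^2}2+\f{(1+a)^2}2-1=a^2,\]
which matches the right-hand side exactly. After dividing by the positive constant $K$, the claim thus reduces to
\[\f{(1-a)^2}{u_1}+\f{(1+a)^2}{u_2}\geq\f4{u_1+u_2}=\f{((1-a)+(1+a))^2}{u_1+u_2},\]
which is the Cauchy-Schwarz inequality in Engel form, valid for arbitrary real numerators and any positive denominators $u_1,u_2$. That should finish the proof.

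I expect no serious obstacle here; everything is algebraic once the substitution is in place. The one non-obvious step is recognizing that $\tilde g-1$ is the reciprocal of an affine function of $q$, which is precisely the structure for which the Cauchy-Schwarz estimate above is sharp. As a byproduct, this observation shows that Lemma \ref{lem_fracconvex} itself is sharp, with equality exactly when $(1-a)u_2=(1+a)u_1$.
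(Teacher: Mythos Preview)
Your proof is correct and follows essentially the same route as the paper's: both introduce variables making $\tilde g-1$ a positive multiple of a reciprocal (the paper's $x_i$ is your $u_i/K$), observe that the constant part of $\tilde g$ contributes exactly $a^2$, and reduce to the nonnegativity of the remainder. The only cosmetic difference is the final step --- you invoke Cauchy--Schwarz in Engel form, while the paper verifies the same quadratic-in-$a$ inequality by computing that its discriminant vanishes; these are two phrasings of the same identity, and your version has the advantage of making the equality case transparent.
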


\begin{proof} For \(i=1,2\) take 
	\(x_i\) s.t.\ 
	\begin{align*}
		\tilde g(q_i)&=1+\f1{x_i}.\\
		\intertext{Then \(x_i>0\) and}
		\tilde g\Bigl(\f{q_1+q_2}2\Bigr)&=1+\f2{x_1+x_2}.
	\end{align*}
	Hence it suffices to confirm the positivity of
	\begin{align*}
		&\f{(1-a)^2}2\Bigl(1+\f1{x_1}\Bigr)+\f{(1+a)^2}2\Bigl(1+\f1{x_2}\Bigr)-\Bigl(1+\f2{x_1+x_2}\Bigr)-a^2\\
		&=\f 12a^2\Bigl(\f1{x_1}+\f1{x_2}\Bigr)+a\Bigl(\f1{x_2}-\f1{x_1}\Bigr)+\f12\Bigl(\f1{x_1}+\f1{x_2}-4\f1{x_1+x_2}\Bigr)\\
		&=\f1{x_1x_2}\biggl[\f12a^2(x_1+x_2)+a(x_1-x_2)+\f12\Bigl(x_1+x_2-4\f{x_1x_2}{x_1+x_2}\Bigr)\biggr]
	\end{align*}
	which is a quadratic polynomial in \(a\). Since \(x_1+x_2\geq0\) and \(\f1{x_1x_2}\geq0\) it has a positive leading coefficient. The discriminant is
	\begin{align*}
		&(x_1+x_2)\Bigl(x_1+x_2-4\f{x_1x_2}{x_1+x_2}\Bigr)-(x_1-x_2)^2\\
		&=(x_1+x_2)^2-4x_1x_2-(x_1-x_2)^2=0,
	\end{align*}
	and so the minimum of the polynomial is zero.
\end{proof}

\begin{lem}\label{lem_g2pm1}
	\[g(2p-1)=\tilde g(2p-1).\]
\end{lem}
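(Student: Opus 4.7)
The plan is to verify the identity by direct computation, exploiting the fact that $2p-1\leq p$ (since $p\leq 1$), which places $2p-1$ in the linear branch of the piecewise definition of $g$.

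First I would observe that, since $2p-1\leq p$, the definition of $g$ gives
\[g(2p-1)=g(p)\cdot\f{2p-1}p,\]
and since $p\geq p$, we have $g(p)=\tilde g(p)$. I would then simplify $\tilde g(p)$ by substituting $q=p$ into its defining formula: the factor $3p-2q$ in the denominator becomes $p$, cancelling with the $p$ in the numerator, so
\[g(p)=\tilde g(p)=1+\f{2-p}{3p-2}=\f{2p}{3p-2}.\]
Multiplying by $\f{2p-1}p$ yields $g(2p-1)=\f{2(2p-1)}{3p-2}$.

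Next I would compute $\tilde g(2p-1)$ from its definition. The crucial cancellation is that
\[3p-2(2p-1)=2-p,\]
so the factor $2-p$ in the numerator of $\tilde g$ cancels with the same factor arising in the denominator:
\[\tilde g(2p-1)=1+\f{p(2-p)}{(3p-2)(2-p)}=1+\f p{3p-2}=\f{4p-2}{3p-2}=\f{2(2p-1)}{3p-2}.\]
Comparing the two expressions gives the desired equality.

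There is no real obstacle here; the lemma is essentially a bookkeeping check that the linear extension of $\tilde g$ below $p$ was calibrated so that it agrees with $\tilde g$ again at the symmetric point $2p-1$. The only content worth highlighting is the algebraic coincidence $3p-2(2p-1)=2-p$, which is what makes the piecewise choice in \eqref{eq_gdef} consistent at $q=2p-1$ and will later allow Proposition \ref{pro_gproperties} to be reduced to Lemma \ref{lem_fracconvex} in the regime $\f{q_1+q_2}2\geq p$ via pairs with $q_1=2p-1$.
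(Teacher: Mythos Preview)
Your proof is correct and follows essentially the same direct computation as the paper: both evaluate $g(2p-1)$ via the linear branch $g(p)\cdot\f{2p-1}p$, simplify $g(p)=\f{2p}{3p-2}$, and match the result with $\tilde g(2p-1)=1+\f p{3p-2}$. Your version is slightly more explicit in writing out the cancellation $3p-2(2p-1)=2-p$ on the $\tilde g$ side, which the paper leaves to the reader.
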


\begin{proof}
	\begin{align*}
		g(2p-1)&=g(p)\f{2p-1}p=\biggl[1+\f{p(2-p)}{p(3p-2)}\biggr]\f{2p-1}p\\
		&=\f{2p}{3p-2}\f{2p-1}p=1+\f p{3p-2}=\tilde g(2p-1).
	\end{align*}
\end{proof}

\begin{proof}[Proof of Proposition \ref{pro_gproperties}]
	Lemma \ref{lem_fracconvex} with \(a=0\) implies that \(\tilde g\) is midpoint convex and thus convex. By Lemma \ref{lem_g2pm1} and by the definition of \(g(p)\) we have that \(g(q)=\tilde g(q)\) at the two values \(q=2p-1,p\). This means that on \([0,p]\) the function \(g\) describes the line that passes through these two distinct points. On \([p,1]\) recall that \(g=\tilde g\). It follows from this that also \(g\) is convex. This means that \eqref{eq_Gpos} holds for \(a=0\) and so it suffices to consider the case 
	\(\f{q_1+q_2}2\in[p,1]\). There we have \(q_1\geq2p-q_2\geq2p-1\) and similarly \(q_2\geq2p-1\). From the considerations above we then get \(g(q_1)\geq\tilde g(q_1),\ g(q_2)\geq\tilde g(q_2),\ g\bigl(\f{q_1+q_2}2\bigr)=\tilde g\bigl(\f{q_1+q_2}2\bigr)\) which implies that for all \(a\) we have
	\begin{align*}
		&\quad\f{(1-a)^2}2g(q_1)+\f{(1+a)^2}2g(q_2)-g\Bigl(\f{q_1+q_2}2\Bigr)\\
		&\geq\f{(1-a)^2}2\tilde g(q_1)+\f{(1+a)^2}2\tilde g(q_2)-\tilde g\Bigl(\f{q_1+q_2}2\Bigr)\geq a^2,
	\end{align*}
	where the last inequality follows from Lemma \ref{lem_fracconvex}. This finishes the proof of \eqref{eq_Gpos}.

	The upper bound in \eqref{eq_gcomp1} holds for \(C=g(1)\) because \(g\) is convex and nonnegative and \(g(0)=0\). For the lower bound, recall that for \(q\in[0,p]\) we have \(g(q)=\f qpg(p)\), so that convexity implies \(g(q)\geq\f qpg(p)\) for all \(q\in[0,1]\). It follows from the definition of \(g\) that \(g(p)\geq1\) and therefore \(g(q)\geq\f qp\geq q\).
\end{proof}

The following lemma translates Proposition \ref{pro_gproperties} into our setting of Haar functions on weighted \(L^2\) spaces.
\begin{lem}\label{lem_gtoweightprop} For every \(n\in\mathbb{N}\) we have
	\begin{align}
		\label{eq_indstepmu}\Bigl\|\sum_{I\in\D_{n+1}}a_Ih_I\ind_E\Bigr\|_{L^2(w_{n+1})}^2-\Bigl\|\sum_{I\in\D_n}a_Ih_I\ind_E\Bigr\|_{L^2(w_n)}^2&\geq\sum_{I\in\D_{n+1}\setminus\D_n}\|a_Ih_I\ind_E\|_2^2\\
		\intertext{and}
		\label{eq_wsim1}1\leq w_n\leq C.
	\end{align}
	The constant \(C\) is the same as in Proposition \ref{pro_gproperties}.
\end{lem}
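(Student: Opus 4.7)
The plan is to reduce \eqref{eq_indstepmu} to a localised pointwise inequality that is exactly what Proposition \ref{pro_gproperties} delivers, and to read off $1\leq w_n\leq C$ directly from the bounds on $g$.

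The bound $1\leq w_n\leq C$ is immediate: on each $I\in\D_{n+1}\setminus\D_n$ with $q_I>0$ the value $w_n|_I=g(q_I)/q_I$ lies in $[1,C]$ by \eqref{eq_gcomp1}, and on the intervals with $q_I=0$ the value of $w_n$ is immaterial and may be chosen in $[1,C]$.

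For \eqref{eq_indstepmu}, I would decompose both sides as sums over the intervals $I\in\D_{n+1}\setminus\D_n$ (each of length $2^{-n-1}$). Every $I'\in\D_n$ meeting such an $I$ must strictly contain it, so $\sum_{I'\in\D_n}a_{I'}h_{I'}$ is a constant, say $c_I$, on $I$; adding the single extra Haar function $a_I h_I$ then makes $\sum_{I'\in\D_{n+1}}a_{I'}h_{I'}$ equal to $c_I-a_I$ on $\lh I$ and $c_I+a_I$ on $\rh I$. Using $|I\cap E|=q_I|I|$ to cancel the factor $1/q_I$ in the definitions of $w_n$ and $w_{n+1}$, the local contributions come out to $c_I^2 g(q_I)|I|$ and $\f{|I|}{2}\bigl[(c_I-a_I)^2 g(q_{\lh I})+(c_I+a_I)^2 g(q_{\rh I})\bigr]$ respectively, while the local term on the right-hand side is $a_I^2 q_I|I|$. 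Since $q_I=(q_{\lh I}+q_{\rh I})/2$, dividing by $|I|$ reduces the whole claim to the pointwise inequality
\[\f12(c_I-a_I)^2 g(q_{\lh I})+\f12(c_I+a_I)^2 g(q_{\rh I})-c_I^2 g(q_I)\geq a_I^2 q_I\]
for every $I\in\D_{n+1}\setminus\D_n$.

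If $c_I\neq 0$ I would divide by $c_I^2$ and apply \eqref{eq_Gpos} with $a=a_I/c_I$: its hypothesis is satisfied because either $a_I=0$ (so $a=0$) or $q_I\geq p$ by assumption on the sequence. The proposition yields $\geq a^2$, and rescaling back, together with $q_I\leq 1$, gives $\geq a_I^2 q_I$. The remaining case $c_I=0$ is not covered by this rescaling; here the inequality reduces to $g(q_{\lh I})+g(q_{\rh I})\geq q_{\lh I}+q_{\rh I}$, which follows from $g(q)\geq q$ in \eqref{eq_gcomp1}. The main obstacle is the bookkeeping needed to isolate the right local inequality on each $I$; once that is in place, Proposition \ref{pro_gproperties} does all the work.
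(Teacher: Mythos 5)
Your proof is correct and follows essentially the same route as the paper: decompose over the intervals of $\D_{n+1}\setminus\D_n$, use that the coarser Haar sum is a constant $c_I$ there, reduce to the scalar inequality, and settle the cases $c_I\neq0$ and $c_I=0$ via \eqref{eq_Gpos} (with $a=a_I/c_I$, using $q_I\leq1$) and the lower bound in \eqref{eq_gcomp1} respectively. Your explicit verification of the hypothesis of \eqref{eq_Gpos} via the standing assumption $a_I=0$ when $q_I<p$ is a point the paper leaves implicit.
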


\begin{proof} Recall that on \(I\in\D_{n+1}\setminus\D_n\) we assign \(w_n\) the constant value \(\f{q(q_I)}{q_I}\) with \(q_I=\f{|I\cap E|}{|I|}\) if \(q_I>0\). Where \(|I\cap E|\) vanishes, the value of \(w_n\) does not matter because the integrated function in \eqref{eq_indstepmu} is zero a.e.\ on such \(I\) anyways.

	First note that \eqref{eq_wsim1} is equivalent to \eqref{eq_gcomp1}. We prove \eqref{eq_indstepmu} using mostly \eqref{eq_Gpos}. Partition the domain of integration on the left-hand-side of \eqref{eq_indstepmu} into \(\D_{n+1}\setminus\D_n\) so that the inequality becomes
	\begin{align*}
		\sum_{J\in\D_{n+1}\setminus\D_n}&\int_{E\cap J}\biggl[\Bigl(\sum_{I\in\D_{n+1}}a_Ih_I\Bigr)^2w_{n+1}-\Bigl(\sum_{I\in\D_n}a_Ih_I\Bigr)^2w_n\biggr]\\
		\geq\sum_{J\in\D_{n+1}\setminus\D_n}&\int_E(a_Jh_J)^2.
	\end{align*}
	We prove this inequality for each summand \(J\in\D_{n+1}\setminus\D_n\) individually. So fix \(J\in\D_{n+1}\setminus\D_n\). Then for each \(I\in\D_n\) the function \(h_I\) is constant on \(J\). That means we may write 
	\begin{align*}
		\ind_J\sum_{I\in\D_n}a_Ih_I&=b_J\ind_J\\
		\intertext{for some \(b_J\in\mathbb{R}\). For \(I\in\D_{n+1}\setminus\D_n\) the function \(h_I\) is nonzero if and only if \(I=J\), so that}
		\ind_J\sum_{I\in\D_{n+1}}a_Ih_I&=b_J\ind_J+a_Jh_J.
	\end{align*}
	So it suffices to show
	\begin{equation}\label{eq_indstepmuinterval} 
		\int_{E\cap J}\Bigl[(b_J\ind_J+a_Jh_J)^2w_{n+1}-(b_J\ind_J)^2w_n\Bigr]\geq\int_E(a_Jh_J)^2
	\end{equation}
	in order to prove \eqref{eq_indstepmu}.
	Write
	\begin{align*}
		q_1&\seq\f{|\lh J\cap E|}{|\lh J|},&q_2&\seq\f{|\rh J\cap E|}{|\rh J|},&\f{q_1+q_2}2&=\f{|J\cap E|}{|J|},\\
		\intertext{so that we have}
		\ind_{\lh J}w_{n+1}&=\f{g(q_1)}{q_1}\ind_{\lh J},&\ind_{\rh J}w_{n+1}&=\f{g(q_2)}{q_2}\ind_{\rh J},&\ind_Jw_n&=\f{g\bigl(\f{q_1+q_2}2\bigr)}{\f{q_1+q_2}2}\ind_J.
	\end{align*}
	if the respective denominators are positive. Evaluating the integrals, \eqref{eq_indstepmuinterval} then reads
	\[
		(b_J-a_J)^2|\lh J|g(q_1)+(b_J+a_J)^2|\rh J|g(q_2)-b_J^2|J|g\Bigl(\f{q_1+q_2}2\Bigr)\geq|J|\f{q_1+q_2}2a_J^2,
	\]
	also if \(q_1\) or \(q_2\) are zero because \(g(0)=0\). Then divide both sides by \(|J|\). For \(b_J=0\) we obtain
	\[a_J^2\f{g(q_1)+g(q_2)}2\geq\f{q_1+q_2}2a_J^2.\]
	This inequality holds due to the lower bound in \eqref{eq_gcomp1}. In case \(b_J\neq0\) we additionally divide by \(b_J^2\) and obtain
	\[\Bigl(1-\f{a_J}{b_J}\Bigr)^2\f12g(q_1)+\Bigl(1+\f{a_J}{b_J}\Bigr)^2\f12g(q_2)-g\Bigl(\f{q_1+q_2}2\Bigr)\geq\f{q_1+q_2}2\Bigl(\f{a_J}{b_J}\Bigr)^2.\]
	This inequality is a consequence of \eqref{eq_Gpos} because \(\f{q_1+q_2}2\leq1\). Note that we can also obtain the case \(b_J=0\) from \eqref{eq_Gpos} by sending \(a_J\rightarrow\infty\), instead of from \eqref{eq_gcomp1}. That way we would even get the stronger inequality without the factor \(\f{q_1+q_2}2\).
\end{proof}

\begin{proof}[Proof of Theorem \ref{theo_arbeRiesz} in case \(p>\f23\)] We use Lemma \ref{lem_gtoweightprop}. Because \(\D_0=\{[0,1)\}\) consists of only one interval we get
	\begin{equation}
		\label{eq_indstartmu}\Bigl\|\sum_{I\in\D_0}a_Ih_I\ind_E\Bigr\|_{L^2(w_0)}^2\geq\sum_{I\in\D_0}\|a_Ih_I\ind_E\|_2^2
	\end{equation}
	from the lower bound in \eqref{eq_wsim1}. 
	Summing up \eqref{eq_indstartmu} and \eqref{eq_indstepmu} for \(n=1,\ldots,k-1\) we get
	\[\Bigl\|\sum_{I\in\D_k}a_Ih_I\ind_E\Bigr\|_{L^2(w_k)}^2\geq\sum_{I\in\D_k}\|a_Ih_I\ind_E\|_2^2,\]
	and the upper bound in \eqref{eq_wsim1} allows us to get rid of the weight
	\[C\Bigl\|\sum_{I\in\D_k}a_Ih_I\ind_E\Bigr\|_2^2\geq\Bigl\|\sum_{I\in\D_k}a_Ih_I\ind_E\Bigr\|_{L^2(w_k)}^2.\]
	This proves the part \(p>\f23\) of Theorem \ref{theo_arbeRiesz} with \(c=\f1C\) if we only consider finite sums. For infinite sums where the right hand side of \eqref{eq_goal} converges, we may also pass to the limit \(n\rightarrow\infty\) with the help of \eqref{eq_calcbessel}.
\end{proof}

\section{Proof of the Case \texorpdfstring{\(p\leq\f23\)}{p <= 2/3}}\label{sec_plef23} 

Fix \(E\seq[0,\f23]\). We build the counterexample from the sequence of dyadic intervals \((I_{2n})_{n=0,1,\ldots}\), defined inductively by
\begin{align*}
	I_0&\seq[0,1],\\
	I_{2n+1}&\seq\rh{I_{2n}},\\
	I_{2n+2}&\seq\lh{I_{2n+1}}.
\end{align*}
\begin{lem}\label{lem_diswithf23}
	For all \(n=0,1,\ldots\) we have
	\begin{align*}
		|I_n|&=2^{-n},&|I_{2n}\cap E|&=\f23|I_{2n}|,&|I_{2n+1}\cap E|&=\f13|I_{2n+1}|.
	\end{align*}
\end{lem}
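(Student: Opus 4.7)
The plan is to prove all three claims by a single induction on \(n\). The length identity \(|I_n|=2^{-n}\) is immediate, since halving an interval halves its length and \(|I_0|=1\), so the real content lies in the two intersection identities.

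I would maintain the stronger geometric invariant that \(\f23\) is an interior point of \(I_{2n}\), located precisely at the \(\f23\)-point of that interval: writing \(I_{2n}=[a,a+2^{-2n}]\), the invariant is \(\f23-a=\f23|I_{2n}|\). This instantly gives \(|I_{2n}\cap E|=\f23-a=\f23|I_{2n}|\). It also places \(\f23\) in the right half, so \(I_{2n+1}=\rh{I_{2n}}\) contains \(\f23\) at offset \(\f23|I_{2n}|-\f12|I_{2n}|=\f16|I_{2n}|=\f13|I_{2n+1}|\) from its left endpoint, which yields \(|I_{2n+1}\cap E|=\f13|I_{2n+1}|\). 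Since this offset is less than \(\f12|I_{2n+1}|\), the point \(\f23\) lies in the left half of \(I_{2n+1}\); when we pass to \(I_{2n+2}=\lh{I_{2n+1}}\) its left endpoint agrees with that of \(I_{2n+1}\), and \(\f23\) then sits at offset \(\f13|I_{2n+1}|=\f23|I_{2n+2}|\) from that endpoint. This is exactly the invariant at level \(n+1\).

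The base case \(n=0\) follows from \(I_0=[0,1]\) and \(\f23-0=\f23|I_0|\). I do not expect any substantial obstacle: the choice \(E=[0,\f23]\) together with the alternating take-right-then-take-left rule is engineered precisely so that the relative position of \(\f23\) toggles between the \(\f23\)-point of even-indexed intervals and the \(\f13\)-point of odd-indexed intervals, and the whole argument reduces to this short bookkeeping verification.
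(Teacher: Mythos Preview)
Your argument is correct and is essentially the same as the paper's: both proceed by induction on \(n\), tracking the relative position of the point \(\f23\) inside the successive intervals and reading off the intersection lengths from that offset. The only cosmetic difference is that you phrase the induction hypothesis as an explicit invariant \(\f23-a=\f23|I_{2n}|\), while the paper narrates the same bookkeeping in words.
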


\begin{proof}
	It is clear that we have \(|I_n|=2^{-n}\). For the other statements we proceed by induction on \(n\). For \(n=0\) we have 
	\begin{align*}
		\biggl|[0,1]\cap\Bigl[0,\f23\Bigr]\biggr|&=\f23=\f23|[0,1]|,\\
		\biggl|\Bigl[\f12,1\Bigr]\cap\Bigl[0,\f23\Bigr]\biggr|&=\f23-\f12=\f16=\f13\biggl|\Bigl[\f12,1\Bigr]\biggr|.
	\end{align*} 
	Now assume the lemma holds for \(n\). That means that the point \(\f23\) lies \(\f13|I_{2n+1}|\) to the right from the left boundary of \(I_{2n+1}\), i.e.\ \(\f23|I_{2(n+1)}|\) to the right from the left boundary of \(I_{2(n+1)}\). Therefore we have \(|I_{2(n+1)}\cap E|=\f23|I_{2(n+1)}|\). That in turn implies that the point \(\f23\) lies \(\f16|I_{2(n+1)}|\) to the right from the midpoint of \(I_{2(n+1)}\), i.e.\(\f13|I_{2(n+1)+1}|\) to the right from the left boundary of \(I_{2(n+1)+1}\). Therefore we have \(|I_{2(n+1)+1}\cap E|=\f13|I_{2(n+1)+1}|\), finishing the proof of the lemma for \(n+1\).
\end{proof}

Further set
\begin{align*}
	a_0&\seq1,\\
	a_{2n}&\seq2^{n-1},\qquad n\geq1.
\end{align*}
The following proposition proves the case \(p\leq\f23\) of Theorem \ref{theo_arbeRiesz}.

\begin{pro}\label{pro_counter}
	For each \(n\) we have
	\begin{align}
		\label{eq_Asimn}\sum_{k=0}^n\|a_{2k}h_{I_{2k}}\ind_E\|_2^2&=\f23+\f n6,\\
		\label{eq_Bconst}\Bigl\|\sum_{k=0}^na_{2k}h_{I_{2k}}\ind_E\Bigr\|_2^2&=\f23.
	\end{align}
\end{pro}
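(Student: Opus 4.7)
My plan is to dispatch \eqref{eq_Asimn} by a direct computation and \eqref{eq_Bconst} by induction on $n$.

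For \eqref{eq_Asimn}, since $h_I^2=\ind_I$ pointwise, the $k$-th summand equals $a_{2k}^2|I_{2k}\cap E|$, which by Lemma \ref{lem_diswithf23} becomes $\f23a_{2k}^2\cdot 4^{-k}$. Substituting $a_0=1$ and $a_{2k}=2^{k-1}$ for $k\geq1$ produces $\f23$ when $k=0$ and $\f16$ for every $k\geq1$; summing over $k=0,\dots,n$ yields $\f23+\f n6$.

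For \eqref{eq_Bconst}, write $f_n\seq\sum_{k=0}^na_{2k}h_{I_{2k}}\ind_E$. The base case $\|f_0\|_2^2=|I_0\cap E|=\f23$ is immediate from Lemma \ref{lem_diswithf23}. For the inductive step I expand
\[
\|f_{n+1}\|_2^2-\|f_n\|_2^2 = 2\la f_n,a_{2(n+1)}h_{I_{2(n+1)}}\ind_E\ra+\|a_{2(n+1)}h_{I_{2(n+1)}}\ind_E\|_2^2,
\]
whose last term equals $\f16$ by the first part of the proposition already proved; the task then reduces to showing that the cross term equals $-\f1{12}$.

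The cross term is the main obstacle, and I handle it in two steps. First, by construction $I_{2k+2}=\lh{\rh{I_{2k}}}\subset\rh{I_{2k}}$, so $h_{I_{2k}}=+1$ on every $I_{2m}$ with $m>k$; consequently on $I_{2(n+1)}\cap E$ the function $f_n$ reduces to the constant $\sum_{k=0}^na_{2k}=1+(2^n-1)=2^n$. Second, I evaluate $\int_{I_{2(n+1)}\cap E}h_{I_{2(n+1)}}$ by direct geometry: Lemma \ref{lem_diswithf23} gives $|I_{2(n+1)}\cap E|=\f23|I_{2(n+1)}|>\f12|I_{2(n+1)}|$, so $\lh{I_{2(n+1)}}$ lies entirely in $E$ (contributing $-\f12|I_{2(n+1)}|$) while $E\cap\rh{I_{2(n+1)}}$ has length $\f16|I_{2(n+1)}|$ (contributing $+\f16|I_{2(n+1)}|$), for a total of $-\f13\cdot 4^{-(n+1)}$. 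Multiplying by $a_{2(n+1)}=2^n$ and the constant value $2^n$ produces $-\f1{12}$, whence $\|f_{n+1}\|_2^2-\|f_n\|_2^2=2(-\f1{12})+\f16=0$ and the induction closes. The only delicacy is the geometric bookkeeping that forces $f_n$ to be constant on $I_{2(n+1)}\cap E$ and that makes the $E$-truncated Haar integral collapse to exactly $-\f13|I_{2(n+1)}|$.
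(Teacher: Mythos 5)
Your proof is correct, and it takes a mildly different route from the paper's for \eqref{eq_Bconst}. The paper proves by induction the stronger closed-form identity \(\sum_{k=0}^na_{2k}h_{I_{2k}}\ind_E=-\ind_{[0,\f12)}+2^n\ind_{\rh{I_{2n}}\cap E}\) and then reads off the norm in one line; you instead induct on the norm itself, expanding \(\|f_{n+1}\|_2^2\) and showing that the cross term \(2\la f_n,a_{2(n+1)}h_{I_{2(n+1)}}\ind_E\ra=-\f16\) exactly cancels the new diagonal term \(\f16\) supplied by \eqref{eq_Asimn}. The essential geometric input is the same in both arguments, namely that \(f_n\) is constantly equal to \(2^n\) on \(I_{2(n+1)}\cap E\) because all earlier Haar functions take the value \(+1\) there and \(\sum_{k=0}^n a_{2k}=2^n\). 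Your version avoids writing down the global shape of the partial sums but must compute the truncated Haar integral \(\int_{I_{2(n+1)}\cap E}h_{I_{2(n+1)}}=-\f13|I_{2(n+1)}|\) by hand, whereas the paper's closed form makes the cancellation visible as a telescoping of the terms \(2^n\ind_{\rh{I_{2n}}\cap E}\). All of your numerical steps check out (\(2^n\cdot2^n\cdot(-\f13)4^{-(n+1)}=-\f1{12}\), so the increment is \(2(-\f1{12})+\f16=0\)). The one place where you should be slightly more explicit is the deduction that \(\lh{I_{2(n+1)}}\subset E\): this uses not only \(|I_{2(n+1)}\cap E|=\f23|I_{2(n+1)}|>\f12|I_{2(n+1)}|\) but also the fact that \(E=[0,\f23]\) is an initial interval, so that \(I_{2(n+1)}\cap E\) is precisely the left-hand portion of \(I_{2(n+1)}\); with that remark added, the argument is complete.
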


\begin{proof} 
	By Lemma \ref{lem_diswithf23} we have \(|I_{2n}\cap E|=\f232^{-2n}\). Thus
	\begin{align*}
		\|a_0h_{I_0}\ind_E\|_2^2&=\f23,\\
		\|a_{2n}h_{I_{2n}}\ind_E\|_2^2&=\f232^{-2n}2^{2(n-1)}=\f16,\qquad n\geq1,
	\end{align*}
	which implies \eqref{eq_Asimn}.
	
	In order to prove \eqref{eq_Bconst}, first note that the support of \(h_{I_{2(n+1)}}\ind_E\) is \(\rh{I_{2n}}\cap E\). Therefore it follows by induction on \(n\) that
	\[\sum_{k=0}^na_{2k}h_{I_{2k}}\ind_E=-\ind_{[0,\f12)}+2^n\ind_{\rh{I_{2n}}\cap E},\]
	since
	\[2^n\ind_{\rh{I_{2n}}\cap E}+2^nh_{I_{2(n+1)}}\ind_E=2^{n+1}\ind_{\rh{I_{2(n+1)}}\cap E}.\] 
	By Lemma \ref{lem_diswithf23} we have \(|\rh{I_{2n}}\cap E|=\f13|\rh{I_{2n}}|=\f132^{-2n-1}\) so that we obtain
	\[\Bigl\|\sum_{k=0}^na_{2k}h_{I_{2k}}\ind_E\Bigr\|_2^2=\f12\cdot(-1)^2+\f132^{-2n-1}2^{2n}=\f23.\]
\end{proof}

\section{Remarks}\label{sec_remarks}

\subsection{Optimality of the Constant \(c\)}

From the proof we get an explicit expression for the constant in \eqref{eq_goal}
\[c=\f1 C=\f1{g(1)}=\f{(3p-2)^2}{(3p-2)^2+p(2-p)}=\f{\bigl(p-\f23\bigr)^2}{\bigl(p-\f23\bigr)^2+\f19p(2-p)}\]
and since
\[p(2-p)=(p-\f23)\biggl[\f43-(p-\f23)\biggr]+\f23\biggl[\f43-(p-\f23)\biggr]=\f89+\ord\Bigl(p-\f23\Bigr)\]
we have
\[c=\f{\bigl(p-\f23\bigr)^2}{\f8{81}+\ord(p-\f23)}=\f{81}8\Bigl(p-\f23\Bigr)^2\f1{1+\ord(p-\f23)}=\f{81}8\Bigl(p-\f23\Bigr)^2+\ord\Bigl(p-\f23\Bigr)^3.\] 
However this constant \(c\) is likely not maximal because \(g\) satisfies a stronger bound than the required \(g(q)\geq q\), and because we dropped a factor \(\f{q_1+q_2}2\) in the deduction of \eqref{eq_Gpos}. We only did this because sending \(q_1\rightarrow q_2\) in \eqref{eq_Gpos} leads to an ODE with solution \(\tilde g\), while with the factor \(\f{q_1+q_2}2\) in place we could not solve the ODE. We did however minimize \(C\) in some respect: There are multiple solutions to the ODE from \eqref{eq_Gpos} such that the corresponding \(g\) satisfies \eqref{eq_gcomp1} and \eqref{eq_Gpos} with some \(C\). Among all those, \(\tilde g\) has the smallest \(C\) for \(p\rightarrow\f23\). For a proof of this and for more details; see Section 2.1 in \cite{thesis}. 
In Section 3 in \cite{thesis} we also provide a set \(E\) for which we prove an explicit sharp constant \(c_{\tx{s}}\) which satisfies \(c_{\tx{s}}=27\bigl(p-\f23)^2+\ord(p-\f23)^3\). 
We conjecture that \eqref{eq_goal} holds already with this particular constant \(c_{\tx{s}}\). In Section 4 of \cite{thesis} we prove that this is indeed the case at least for certain \(E\).

\subsection{Further remarks on Theorem \ref{theo_arbeRiesz}}

For \(p>\f23\) inequality \eqref{eq_goal} still holds if we add the constant function to the sums, i.e.\ allow \(a_{[0,2)}\neq0\), even though usually \(|[0,2)\cap E|<p|[0,2)|\). 

Furthermore, Theorem \ref{theo_arbeRiesz} is not a consequence of the fact that \(\bigl\{h_I\ind_E\bigm||I\cap E|\geq p|I|\bigr\}\) is only a small perturbation of the orthogonal set \(\bigl\{h_I\bigm||I\cap E|\geq p|I|\bigr\}\), in the sense that 
\[\|h_I-h_I\ind_E\|_2^2\leq(1-p)\|h_I\|_2^2<\f13\|h_I\|_2^2.\]
In order to see this, consider the following example. Assume that \(u_1,\ldots,u_n\) are orthonormal. Abbreviate \(u\seq u_1+\ldots+u_n\) and for \(i=1,\ldots,n\) set
\[u_i'\seq u_i-\f1n u.\]
Then
\[\|u_i-u_i'\|^2=\f1{n^2}\|u\|^2=\f1{n^2}\sum_{i=1}^n\|u_i\|^2=\f1n.\]
but
\[\|u_1'+\ldots+u_n'\|^2=\|u-u\|^2=0\not\gtrsim\|u_1'\|^2+\ldots+\|u_n'\|^2.\]

\subsection{Related Topics}

The starting point of this work was the following question, because its answer could provide ideas on how to to extend the result in \cite{kovac_thiele_zorin-kranich_2015} to three general functions.
\begin{que}\label{que_orig}
	Let \(\D\) be the set of dyadic intervals of \([0,1)\). Let \([0,1)=E_0\cup E_1\) be a partition. Is there a partition \(\D=D_0\cup D_1\) such that for \(i=0,1\) the equivalence
	\begin{equation}\label{eq_sequenceorig}
		\Bigl\|\sum_{I\in D_i}a_Ih_I\ind_{E_i}\Bigr\|_2^2\sim\sum_{I\in D_i}\|a_Ih_I\ind_{E_i}\|_2^2
	\end{equation}
	is true?
\end{que}

We started investigating this question in Section 5 in \cite{thesis}. An initial approach to Question \ref{que_orig} could be to construct a partition by a majority decision: For \(i=0,1\) take \(D_i\) s.t.\ for all \(I\in D_i\) we have 
\begin{equation}\label{eq_majoritydecision}
	|I\cap E_i|\geq\f12|I|.
\end{equation}
However by the counterexample of Theorem \ref{theo_arbeRiesz} for \(p=\f23\geq\f12\), this strategy does not result in the lower bound in \eqref{eq_sequenceorig}. 
 Although by \eqref{eq_calcbessel} the majority decision \eqref{eq_majoritydecision} at least leads to the upper bound in \eqref{eq_sequenceorig}. Another idea was to use the Feichtinger-Conjecture which was recently resolved by Marcus, Spielman and Srivastava in \cite{marcus2015interlacing}. Based on \cite{marcus2015interlacing}, Bownik, Casazza, Marcus and Speegle proved a quantified version of the Feichtinger Conjecture in \cite{bownik2016improved}. The following theorem is a reformulation of Corollary 6.5 in \cite{bownik2016improved}.

\begin{theo}\label{theo_bcms}
	Let \(C<\f43\) and \(c\seq\f C2-\sqrt{2(C-1)(2-C)}\). Let \(V\) be a sequence in a Hilbert space such that for all \((a_v)_{v\in V}\subset\mathbb{R}\)
	\[\Bigl\|\sum_{v\in V}a_vv\Bigr\|^2\leq C\sum_{v\in V}\|a_vv\|^2.\]
	Then there is a partition \(V=V_0\cup V_1\) such that for \(i=0,1\) we have
	\[\Bigl\|\sum_{v\in V_i}a_vv\Bigr\|^2\geq c\sum_{v\in V_i}\|a_vv\|^2.\]
\end{theo}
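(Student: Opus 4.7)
The statement is flagged explicitly as a reformulation of Corollary 6.5 in \cite{bownik2016improved}, so my plan is to outline how one would derive it from the Marcus--Spielman--Srivastava interlacing polynomials machinery of \cite{marcus2015interlacing}, taken in the quantitative form extracted by Bownik--Casazza--Marcus--Speegle. First, I would reduce to the case where $V$ is finite and the ambient Hilbert space is finite-dimensional: both the Bessel hypothesis and the Riesz conclusion pass to finite subfamilies with uniform constants, so a standard compactness and exhaustion argument recovers the infinite case.

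In this finite setting, set $T_v := vv^*$. The Bessel hypothesis becomes $\sum_{v\in V}T_v \preceq C\cdot I$ on $\operatorname{span}V$, and the Riesz conclusion for $V_i$ is equivalent to $\sum_{v\in V_i}T_v \succeq c\cdot I$ on $\operatorname{span}V_i$, via the identification of the Riesz lower bound with the smallest nonzero eigenvalue of the frame operator. Introduce the random partition $\epsilon\colon V\to\{0,1\}$ in which each $v$ is assigned independently and uniformly, and form the centered Hermitian operator $A_\epsilon := \sum_{v\colon \epsilon(v)=0}T_v - \tfrac12\sum_{v\in V}T_v$. Following MSS, the characteristic polynomials $\det(xI-A_\epsilon)$, indexed over the binary tree of partial assignments, form an interlacing family, so there exists a deterministic $\epsilon$ whose largest root of $\det(xI-A_\epsilon)$ is no larger than the largest root of the expected mixed characteristic polynomial $\mathbb{E}_\epsilon\det(xI-A_\epsilon)$.

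The main obstacle, and the step where the explicit constant actually enters, is to bound the largest root of this expected polynomial by the multivariate barrier method of MSS, optimized under the weaker hypothesis $\sum T_v\preceq CI$ rather than the isotropic normalization used in the original Kadison--Singer application. The refinement of \cite{bownik2016improved} consists precisely in tuning the barrier shift to produce $\bigl\|\sum_{v\in V_i}T_v - \tfrac{C}{2}I\bigr\| \leq \sqrt{2(C-1)(2-C)}$ on $\operatorname{span}V$. The hypothesis $C<\tfrac{4}{3}$ is exactly the range in which this deviation is strictly less than $C/2$, so that $c = \tfrac{C}{2}-\sqrt{2(C-1)(2-C)}$ is positive.

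Rearranging the resulting two-sided operator inequality gives $\sum_{v\in V_i}T_v \succeq c\,I$ on $\operatorname{span}V_i$, and unwinding the equivalence between this smallest frame eigenvalue and the Riesz lower bound of $V_i$ yields the stated $\bigl\|\sum_{v\in V_i}a_vv\bigr\|^2 \geq c\sum_{v\in V_i}\|a_vv\|^2$. Of the three ingredients, the interlacing-family verification is essentially structural and can be invoked as a black box, the final translation between operator and Riesz formulations is routine, while the barrier optimization producing the sharp constant is the genuine technical heart of the argument; this is where I would expect to spend the bulk of the effort if the proof were to be reproduced in full.
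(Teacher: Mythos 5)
The paper does not actually prove this theorem: it is imported verbatim as a reformulation of Corollary~6.5 of \cite{bownik2016improved}, so there is no internal argument to compare against, and a black-box citation would have been an acceptable ``proof'' here. Your sketch of how the result comes out of the Marcus--Spielman--Srivastava machinery is in the right spirit, but it contains one genuine error that would sink the argument if carried out as written.

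The problem is the sentence identifying the Riesz lower bound of \(V_i\) with the smallest nonzero eigenvalue of the frame operator \(\sum_{v\in V_i}vv^*\) on \(\operatorname{span}V_i\). These are not the same thing. The Riesz lower bound is the smallest eigenvalue of the \emph{Gram} matrix \(G_i=(\la v,w\ra)_{v,w\in V_i}\) (after normalizing), acting on \(\ell^2(V_i)\); the frame operator \(TT^*\) and the Gram operator \(T^*T\) share their nonzero spectrum, but \(G_i\) can have \(0\) as an eigenvalue even when the frame operator is bounded below on the span --- take \(V_i\) to be two copies of the same unit vector: the frame operator on the one-dimensional span is \(2I\), yet the coefficient sequence \((1,-1)\) kills the Riesz inequality entirely. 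So a partition making each \(\sum_{v\in V_i}T_v\) well-conditioned on its span (which is what your centered operator \(A_\epsilon\) and the barrier argument deliver --- essentially Weaver's \(KS_2\)) does not yield Riesz sequences. The actual route in \cite{bownik2016improved} (and in the earlier Casazza et al.\ reductions of the Feichtinger conjecture) is to run the interlacing/paving argument on the Gram matrix: one applies the two-sided partition theorem to the rank-one operators built from the columns of \(G^{1/2}\), or equivalently paves \(G-I\) (which has zero diagonal and norm controlled by the Bessel bound \(C\)), concluding \(P_iGP_i\succeq cI\) on \(\ell^2(V_i)\) --- and that statement \emph{is} the Riesz lower bound. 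Your finite-dimensional reduction and the closing remarks are fine, but the operator to which the barrier method must be applied is \(G\), not \(\sum_v vv^*\).
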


Unfortunately it is not clear if Theorem \ref{theo_bcms} can be used to answer Question \ref{que_orig}. The closest consequence of Theorem \ref{theo_bcms} in that direction that we found is the following corollary.

\begin{cor}\label{cor_bcmshere}
	Let \(p>\f34\) and \(c\seq\f1{2p}-\sqrt{2(\f1p-1)(2-\f1p)}\). Let \(E\subset[0,1)\) and \(E=E_0\cup E_1\) be a partition and for \(i=0,1\) set
	\[
		H_i\seq\bigl\{h_I\ind_{E_i}\bigm| I\in\D,\ |I\cap E_i|\geq p|I|\bigr\}.
	\]
	Then \(H_0\cup H_1\) can be partitioned into \(G_0\cup G_1\) where for \(i=0,1\) we have
	\begin{equation}\label{eq_Griesz}
		\Bigl\|\sum_{v\in G_i}a_vv\Bigr\|^2\geq c\sum_{v\in G_i}\|a_vv\|^2.
	\end{equation}
\end{cor}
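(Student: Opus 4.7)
The plan is to apply Theorem \ref{theo_bcms} directly to the combined sequence \(V \seq H_0 \cup H_1\), so the task reduces to verifying the Bessel hypothesis with a constant \(C<\f43\) and then unwinding the conclusion.

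First I would establish the Bessel bound for \(V\) with constant \(C=\f1p\). For each \(i\in\{0,1\}\) separately, the inequality \eqref{eq_calcbessel} gives
\[\Bigl\|\sum_{v\in H_i}a_vv\Bigr\|_2^2\leq\f1p\sum_{v\in H_i}\|a_vv\|_2^2,\]
since every \(v=h_I\ind_{E_i}\in H_i\) satisfies \(|I\cap E_i|\geq p|I|\), which is exactly what the last step of \eqref{eq_calcbessel} uses. Because \(E_0\) and \(E_1\) are disjoint, every function in \(H_0\) is supported in \(E_0\) and every function in \(H_1\) in \(E_1\), so the two families are orthogonal in \(L^2\). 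Therefore
\[\Bigl\|\sum_{v\in V}a_vv\Bigr\|_2^2=\sum_{i=0,1}\Bigl\|\sum_{v\in H_i}a_vv\Bigr\|_2^2\leq\f1p\sum_{v\in V}\|a_vv\|_2^2,\]
so the Bessel hypothesis of Theorem \ref{theo_bcms} holds with \(C=\f1p\).

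Next, the hypothesis \(p>\f34\) translates to \(C=\f1p<\f43\), which is precisely the range for which Theorem \ref{theo_bcms} is applicable. Plugging \(C=\f1p\) into the formula \(c=\f C2-\sqrt{2(C-1)(2-C)}\) reproduces \(c=\f1{2p}-\sqrt{2(\f1p-1)(2-\f1p)}\), the constant asserted in the corollary. Theorem \ref{theo_bcms} then yields a partition \(V=V_0\cup V_1\) such that for \(i=0,1\)
\[\Bigl\|\sum_{v\in V_i}a_vv\Bigr\|^2\geq c\sum_{v\in V_i}\|a_vv\|^2,\]
and setting \(G_i\seq V_i\) produces the required decomposition with \eqref{eq_Griesz}.

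There is essentially no obstacle here; the only thing to be careful about is that Theorem \ref{theo_bcms} is stated with \(\|a_vv\|^2\) on the right-hand side (not \(|a_v|^2\)), so the non-normalised vectors of \(H_0\cup H_1\) can be fed in without rescaling. The orthogonality between \(H_0\) and \(H_1\) is what lets us promote the two separate Bessel bounds of constant \(\f1p\) to a single one with the same constant, which is the only mildly substantive ingredient.
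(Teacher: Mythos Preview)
Your proof is correct and is exactly the argument the paper intends: the corollary is stated in the paper without an explicit proof, as a direct application of Theorem~\ref{theo_bcms} with \(C=\tfrac1p\), and the paper itself notes right after the corollary that \(H_0\) and \(H_1\) are orthogonal, which is precisely the ingredient you use to combine the two Bessel bounds from \eqref{eq_calcbessel} into one.
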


Theorem \ref{theo_arbeRiesz} can be seen as an improvement of Corollary \ref{cor_bcmshere}. That is because by Theorem \ref{theo_arbeRiesz} the two sequences \(H_0\) and \(H_1\) already satisfy \eqref{eq_Griesz} with some other constant \(c>0\), and since \(H_0\) and \(H_1\) are orthogonal to one another, also their union satisfies \eqref{eq_Griesz}, even without partitioning. And this already holds for \(p>\f23\).


\nocite{*}
\bibliographystyle{plain}
\bibliography{bib}

\end{document}